\newtheorem{theorem}{Theorem}
\newtheorem{lemma}[theorem]{Lemma}
\theoremstyle{definition}
\title{Equivalence of Dubins Path on Sphere with Geographic Coordinates and Moving Frames}
\author{Deepak Prakash Kumar \footnote{Graduate Student, Mechanical Engineering, 3123 TAMU, College Station, TX 77843.}, Swaroop Darbha\footnote{Professor, Mechanical Engineering, 3123 TAMU, College Station, TX 77843.}, Satyanarayana G. Manyam\footnote{Research Scientist, DCS Corporation, 4023 Col. Glenn Hwy, Dayton, OH.}, David W. Casbeer\footnote{Sr. Engineer, Control Science Center, AFRL, AIAA Associate Fellow}, Meir Pachter\footnote{Professor, Department of Electrical and Computer Engineering, Air Force Institute of Technology, and AIAA Associate Fellow\\ \textnormal{\footnotesize{DISTRIBUTION STATEMENT A. Approved for public release. Distribution is unlimited. AFRL-2024-2304; Cleared 04/25/2024}}}}
\begin{document}

\maketitle




\section{Introduction}

Path planning for curvature-constrained vehicles has been a growing area of interest in the literature due to the increasing civilian and military applications of such vehicles, which include fixed-wing unmanned aerial vehicles. Curvature-constrained vehicles broadly include vehicles that need to maintain a minimum non-zero longitudinal speed and have a bound on the rate of change of the orientation/heading. A popular model used to model such a vehicle is by Dubins \cite{Dubins}, wherein the author modeled and solved the classical Markov-Dubins problem to navigate the vehicle from a given location and heading to another on a plane using a least distance path. In \cite{Dubins}, the author showed that the optimal path for this problem is of type $CSC, CCC,$ or a degenerate path of the same, where $C \in \{L, R\}$ denotes a left or right turn with the minimum turning radius, and $S$ denotes a straight line segment. A variant of the problem, wherein the vehicle can move forward or back with a bounded speed, was studied in \cite{Reeds_Shepp}. Such a vehicle is called a Reeds-Shepp vehicle.

The results in \cite{Dubins, Reeds_Shepp} were obtained without utilizing Pontryagin's Maximum Principle (PMP) \cite{PMP}, a first-order necessary condition for optimal control problems. Studies such as \cite{sussman_geometric_examples, boissonat} utilized PMP for the considered problems and obtained the results through simpler proofs. Recently, PMP along with phase portraits have been utilized for the classical Markov-Dubins problem and a variant of the problem, known as the weighted Markov-Dubins problem, in \cite{phase_portrait_kaya} and \cite{weighted_Markov_Dubins} to obtain the optimal path through a set of identified cases. 

Many variants of the classical Markov-Dubins problem, wherein the vehicle is considered to move on a plane, have been studied extensively \cite{sinistral/dextral, dubins_circle, weighted_Markov_Dubins}. However, fewer studies have considered path planning in 3D to travel from one configuration (location and orientation) to another. In \cite{monroy}, path planning for a Dubins vehicle on surfaces such as a sphere and hyperbolic surface was considered. In particular, it was shown that the planar Dubins results extend to a unit sphere, wherein the vehicle's minimum turning radius is equal to $r = \frac{1}{\sqrt{2}}$. In particular, the optimal path was shown to be of type $CGC, CCC,$ or a degenerate path of the same, where $C = L, R$ denotes a tight left or right turn of radius $r$, and $G$ denotes a great circular arc. Time-optimal control for a generic system on $SO (3)$, which is the group of special orthogonal matrices including rotation matrices, has been studied in \cite{time_optimal_synthesis_SO3} and \cite{time_optimal_control_satellite} for one and two control input systems, respectively. However, to apply the results of the study for the Dubins model on a sphere considered in \cite{monroy, 3D_Dubins_sphere}, the radius of turn of the vehicle reduces to $r = \frac{1}{\sqrt{2}}$, which is the same as in \cite{monroy}. Furthermore, the number of concatenations shown in \cite{time_optimal_synthesis_SO3} cannot be applied to the Dubins model since the parameter of the model in \cite{time_optimal_synthesis_SO3} equals $\frac{\pi}{4}$ for the Dubins model, for which the result in \cite{time_optimal_synthesis_SO3} is not provided. 

In \cite{3D_Dubins_sphere}, the path planning problem for a Dubins vehicle on a sphere was addressed by modeling the vehicle using a Sabban frame. The authors showed that the Dubins result extends to the sphere for $r \leq \frac{1}{2}.$ In our earlier work \cite{free_terminal_sphere}, we studied the motion planning problem using the same model for a problem with free terminal heading angle,  and showed that the optimal path is of type $CC, CG,$ or a degenerate path of the same for $r \leq \frac{1}{2}$.

The previously surveyed studies for motion planning on a sphere or $SO(3)$ employ moving frames, wherein the chosen frame denotes the vehicle's configuration, i.e., the location and orientation. In practice, motion planning for a vehicle on a sphere, such as the earth\footnotemark\, (approximately), can also be approached by parameterizing the vehicle's configuration using spherical coordinates along with a heading angle, as shown in Fig.~\ref{fig: frames_sphere}. The model is used, particularly in the aerospace community, due to the ease of interpreting the motion of the vehicle. However, to the best of our knowledge, motion planning for a curvature-constrained vehicle moving on a sphere through this modeling approach has not been formulated. Furthermore, the equivalence of modeling a Dubins vehicle through a moving frame approach to the model employing spherical coordinates has not been explored in the literature.

\footnotetext{Though the earth is considered approximately a sphere, we do not account for the difference in the radius with the location on the earth.}

\begin{figure}[htb!]
    \centering
    \includegraphics[width = 0.25\linewidth]{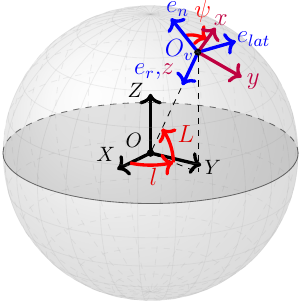}
    \caption{Frames considered on a sphere and lateral force on vehicle}
    \label{fig: frames_sphere}
\end{figure}

Hence, the main contributions of this article include
\begin{enumerate}
    \item Deriving the motion planning problem on a sphere using spherical coordinates,
    \item Revising the motion planning results through the moving frame approach and showing equivalence of the two considered models, thereby solving the motion planning problem for a model using spherical coordinates.
\end{enumerate}

\section{Problem Formulation and Hamiltonian Construction}

The problem of determining the path of shortest length on a unit sphere connecting two configurations for a geodesic-curvature constrained Dubins was formulated in \cite{3D_Dubins_sphere} as the following variational problem:
\begin{equation}
    J = \min \int_0^L 1 ds
\end{equation}
subject to
\begin{equation} \label{eq: Sabban_frame_equations}
    \frac{d{\mathbf X}}{ds} = {\mathbf T}(s), \quad \quad \frac{d {\mathbf T}}{ds} = - {\mathbf X}(s) + u_g(s) {\mathbf N}(s), \quad \quad \frac{d {\mathbf N}}{ds} = - u_g(s){\mathbf T}(s), 
\end{equation}
and the boundary conditions ${\mathcal R}(0) = I_3, {\mathcal R}(L) = R_f$.
Here, $u_g \in [-U_{max}, U_{max}],$ where $U_{max}$ is a parameter depending on the chosen vehicle. The authors set up the Hamiltonian using adjoint vectors $\lambda_1, \lambda_2, \lambda_3$ corresponding to the three constraints. However, in this article, the Hamiltonian setup utilized by Monroy \cite{monroy} will be utilized to easily show non-triviality condition. 
Similar to \cite{monroy}, the equations in Eq.~\eqref{eq: Sabban_frame_equations} can be assembled as
\begin{align} \label{eq: Lie_group_evolution}
    \frac{dg}{ds} = \overrightarrow{l}_1 \left(g (s) \right) - u_g (s) \overrightarrow{L}_{12} \left(g (s) \right),
\end{align}
where $g$ is an element of the Lie group $SO (3)$ and 
$g (s) = \begin{pmatrix}
    \textbf{X} (s) & \textbf{T} (s) & \textbf{N} (s)
\end{pmatrix}.$
Further, $s \rightarrow g (s)$ is a curve on the Lie group, and $\overrightarrow{l}_1$ and $\overrightarrow{L}_{12}$ are left-invariant vector fields on the Lie group,
whose value at the identity of the Lie group is given by \cite{monroy}
\begin{align*}
    l_1 = \begin{pmatrix}
        0 & -1 & 0 \\
        1 & 0 & 0 \\
        0 & 0 & 0
    \end{pmatrix}, \quad L_{12} = \begin{pmatrix}
        0 & 0 & 0 \\
        0 & 0 & 1 \\
        0 & -1 & 0
    \end{pmatrix}.
\end{align*}


\textbf{Remark:} The left-invariant vector field $\overrightarrow{l}_1 \left(g (t) \right)$ and $\overrightarrow{L}_{12} \left(g (t) \right)$ can be expressed in terms of its value at the identity as
\begin{align*}
    \overrightarrow{l}_1 \left(g (s) \right) &= g (s) l_1 = \begin{pmatrix}
        \mathbf{X} (s) & \textbf{T} (s) & \textbf{N} (s)
    \end{pmatrix} \begin{pmatrix}
        0 & -1 & 0 \\
        1 & 0 & 0 \\
        0 & 0 & 0
    \end{pmatrix} = \begin{pmatrix}
        \textbf{T} (s) & -\textbf{X} (s) & \textbf{0}
    \end{pmatrix}, \\
    \overrightarrow{L}_{12} \left(g (s) \right) &= g (s) L_{12} = \begin{pmatrix}
        \mathbf{X} (s) & \textbf{T} (s) & \textbf{N} (s)
    \end{pmatrix} \begin{pmatrix}
        0 & 0 & 0 \\
        0 & 0 & 1 \\
        0 & -1 & 0
    \end{pmatrix} = \begin{pmatrix}
        \textbf{0} & -\textbf{N} (s) & \textbf{T} (s)
    \end{pmatrix}.
\end{align*}
Hence, the system evolves based on the two left-invariant vector fields $\overrightarrow{l}_1 \left(g (t) \right)$ and $\overrightarrow{L}_{12} \left(g (t) \right)$, since $\overrightarrow{l}_1 \left(g (t) \right) - u_g (s) \overrightarrow{L}_{12} \left(g (t) \right)$ yields the differential equation for $\frac{d \mathbf{X} (s)}{ds}, \frac{d \mathbf{T} (s)}{ds},$ and $\frac{d \textbf{N} (s)}{ds}$ obtained from the Sabban frame.

Similar to Monroy \cite{monroy}, Pontryagin's Maximum Principle is applied for the symplectic formalism \cite{Jurdjevic}. The Hamiltonian is obtained as $H (\zeta) = \zeta_0 + h_1 (\zeta) - u_g H_{12} (\zeta),$
where $\zeta \in T^* (G)$ is a one-form on the cotangent bundle of the Lie group $G$. Here, $h_1$ and $H_{12}$ are functions on $T^* G,$ which is the cotangent bundle of $G$. In the above formulation, $\zeta_0$ does not depend on $u_g$ and can be taken as an arbitrary parameter. Hence, it can be normalized to $0$ or $-1$ \cite{monroy}. 


From Pontryagin's Maximum Principle \cite{monroy, Jurdjevic}, the optimal curvature $\kappa (s)$ and the Hamiltonian are immediately obtained when $H_{12} \neq 0$ as 
\begin{align} 
    \kappa (s) &:= -U_{max} sign(H_{12} (\zeta (s))), \label{eq: optimal_curvature} \\
    H (\zeta (s), \kappa, \lambda) &= -\lambda + h_1 (\zeta (s)) - \kappa (s) H_{12} (\zeta (s)), \label{eq: Hamiltonian_rewritten}
\end{align}
where $\lambda = \{0, 1 \}$. It should be noted that the same Hamiltonian expression is obtained as given in \cite{monroy} except for a change in the sign of the optimal curvature. Hence, from \cite{monroy}, the evolution of the functions $h_1, h_2,$ and $H_{12}$ are given by
\begin{equation*}
    \dot{h}_1 (\zeta (s)) = -\kappa (s) h_2 (\zeta (s)), \quad
    \dot{h}_2 (\zeta (s)) = H_{12} (\zeta (s)) + \kappa (s) h_1 (\zeta (s)), \quad
    \dot{H}_{12} (\zeta (s)) = -h_2 (\zeta (s)),
\end{equation*}
where $h_2$ is the function corresponding to the Hamiltonian vector field $\overrightarrow{l}_2$, whose value at the identity of the Lie group is given by $l_2 = [L_{12}, l_1],$
since $l_1, l_2, L_{12}$ forms a basis for the Lie algebra of the Lie group $G$.

\textbf{Remark:} In this model, $-H_{12}, h_2, h_1$ play the role of $A, B,$ and $C$ in \cite{3D_Dubins_sphere}, respectively.

When $H_{12} \equiv 0,$ the corresponding optimal control action is established in the following lemma. 
\begin{lemma}
    If $H_{12} \equiv 0,$ then $\lambda$ cannot be zero; further, for $\lambda = 1,$ $\kappa \equiv 0.$
\end{lemma}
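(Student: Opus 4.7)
The plan is to exploit the evolution equations for $h_1, h_2, H_{12}$ together with the PMP non-triviality condition and the fact that the Hamiltonian is constant along an optimal extremal.

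First I would differentiate the hypothesis: since $H_{12} \equiv 0$, we have $\dot{H}_{12} \equiv 0$, and from $\dot{H}_{12} = -h_2$ this forces $h_2 \equiv 0$. Differentiating again gives $\dot{h}_2 \equiv 0$, and substituting into $\dot{h}_2 = H_{12} + \kappa h_1$ yields the pointwise relation $\kappa(s)\, h_1(s) \equiv 0$. This is the key identity that drives both assertions.

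Next I would invoke the standard consequence of PMP for autonomous problems: along an optimal extremal the Hamiltonian vanishes, $H(\zeta(s), \kappa(s), \lambda) \equiv 0$. With $H_{12} \equiv 0$ the expression in Eq.~\eqref{eq: Hamiltonian_rewritten} reduces to $-\lambda + h_1(\zeta(s)) \equiv 0$, so $h_1 \equiv \lambda$. Now I split on $\lambda$. If $\lambda = 0$ then $h_1 \equiv 0$, and combined with $h_2 \equiv 0$ and $H_{12} \equiv 0$ the one-form $\zeta(s)$ annihilates the three left-invariant vector fields $\overrightarrow{l}_1, \overrightarrow{l}_2, \overrightarrow{L}_{12}$. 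Since $\{l_1, l_2, L_{12}\}$ is a basis for the Lie algebra (as noted in the excerpt), these vector fields span the tangent space at every point, so $\zeta(s) \equiv 0$. Together with $\lambda = 0$ this violates PMP's non-triviality condition, ruling out $\lambda = 0$. If instead $\lambda = 1$, then $h_1 \equiv 1$, so the identity $\kappa\, h_1 \equiv 0$ forces $\kappa \equiv 0$.

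The main obstacle I anticipate is the step where I conclude $\zeta \equiv 0$ from the vanishing of $h_1, h_2, H_{12}$. This requires being careful that the three functions on $T^*G$ really do determine the covector $\zeta$ pointwise, which in turn uses that $\{l_1, l_2, L_{12}\}$ is a basis of the Lie algebra so that $\{\overrightarrow{l}_1(g), \overrightarrow{l}_2(g), \overrightarrow{L}_{12}(g)\}$ is a basis of $T_g G$ at every $g \in SO(3)$; everything else is a short manipulation of the given ODEs and the vanishing of the Hamiltonian.
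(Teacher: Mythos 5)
Your proof is correct and follows essentially the same route as the paper, which simply defers to Monroy's Lemmas 5.1 and 5.2: differentiating $H_{12}\equiv 0$ to get $h_2\equiv 0$ and $\kappa h_1\equiv 0$, using $H\equiv 0$ to get $h_1\equiv\lambda$, ruling out $\lambda=0$ via non-triviality (since $h_1,h_2,H_{12}$ vanishing kills $\zeta$ because $\{l_1,l_2,L_{12}\}$ spans the Lie algebra), and concluding $\kappa\equiv 0$ when $\lambda=1$. In effect you have written out the argument the paper only cites, and no gaps are apparent.
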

\begin{proof}
    The proof follows the same steps utilized in Monroy \cite{monroy} in Lemma~5.1 and Lemma~5.2.
\end{proof}

From this lemma, it follows that the optimal control actions are as follows:
\begin{equation} \label{eq: optimal_control_inputs}
    \kappa (s) \equiv
    \begin{cases}
        - U_{max}, & H_{12} (\zeta (s)) > 0, \lambda \in \{0, 1 \} \\
        U_{max}, & H_{12} (\zeta (s)) < 0, \lambda \in \{0, 1\} \\
        0, & H_{12} (\zeta (s)) \equiv 0, \lambda = 1.
    \end{cases}
\end{equation}
Henceforth, $H_{12} (\zeta (s))$ will be denoted as $H_{12} (s)$ for brevity.

\section{Characterization of Optimal Paths for Model with Sabban Frame}

Using the obtained control actions, it follows that the optimal path is a concatenation of segments corresponding to $\kappa = -U_{max}, 0,$ and $U_{max}.$ It should be noted that $\kappa (s) \equiv 0$ corresponds to an arc of a great circle, and $\kappa (s) = \pm U_{max}$ correspond to an arc of a small circular arc of radius $r = \frac{1}{\sqrt{1 + U_{max}^2}}$ \cite{3D_Dubins_sphere}. 
The three identified segments are depicted in Fig.~\ref{fig: turns_sphere}.

\begin{figure}[htb!]
    \centering
    \includegraphics[width = 0.3\linewidth]{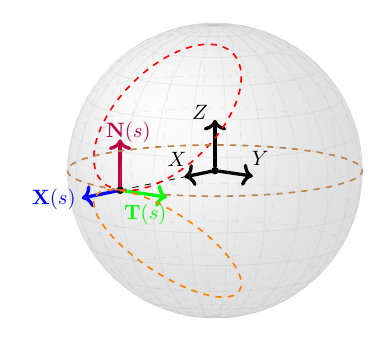}
    \caption{Turns on a sphere}
    \label{fig: turns_sphere}
\end{figure}

From the identified control inputs, it can be observed that the control input is dependent on the scalar function $H_{12} (s).$ 
Hence, it is desired to obtain an equation for the closed-form evolution of $H_{12} (s).$
Noting that $-h_2 (s) = \frac{d H_{12} (s)}{ds}$ and $\frac{d h_2 (s)}{d s} = H_{12} (s) + \kappa (s) h_1 (s),$ and $\frac{d H_{12} (s)}{ds}$ is differentiable, $\frac{d^2 H_{12} (s)}{ds^2} = -H_{12} (s) - \kappa (s) h_1 (s).$
Since $H \equiv 0$ for the problem formulation considered, from Eq.~\eqref{eq: Hamiltonian_rewritten},
$h_1 (s) = \lambda + \kappa (s) H_{12} (s).$
Therefore,
the evolution of $H_{12} (s)$ can be obtained by solving
\begin{equation} \label{eq: evolution_H12}
    \frac{d^2 H_{12} (s)}{d s^2} + \left(1 + \kappa^2 (s) \right) H_{12} (s) = -\lambda \kappa (s),
\end{equation}
a second-order ODE, 
where $\lambda \in \{0, 1\}$. It should be noted here that $\frac{d^2 H_{12} (s)}{d s^2}$ is piecewise continuous since $-H_{12} (s) - \kappa (s) h_1 (s)$ is piecewise continuous. Therefore, the solution for $H_{12} (s)$ and $\frac{d H_{12} (s)}{ds}$ can be obtained for each interval over which $\kappa (s)$ is continuous. 


\section{Equivalence to Model with Geographic Coordinates}

The optimal path problem on the sphere can alternately be posed using spherical coordinates $L$, the latitude, and $l$, the longitude, to represent the vehicle's location. Consider a vehicle traveling over the surface of a sphere of radius $R_e$
at a constant altitude $h$ and at a constant longitudinal speed $v$. 
In this model, a lateral force $F$ is considered a control input that controls the rate of change of the heading angle of the vehicle. 
To provide a mathematical model of the vehicle, three frames are considered, shown in Fig.~\ref{fig: frames_sphere}:
\begin{itemize}
    \item An inertial frame $I$ attached to the center of the sphere $O$ with axes $X, Y, Z$, where $Z$ points towards the north pole.
    \item A body frame $B$ attached to the vehicle's location $O_v$ with axes $e_n, e_l, e_r$ pointed towards the north pole, along the latitude, and radially inwards, respectively.
    \item A navigation frame $N$ attached to $O_v$ with axes $x, y, z$ such that $z = e_r,$ and $x$ and $y$ are in the same plane as $e_n$ and $e_l$. Here, $x$ is pointed along the vehicle's longitudinal direction. Further, $y$ is along the lateral direction of the vehicle, along which $F$ acts. Furthermore, the angle made by $x$ with respect to $e_n$ denotes the heading angle $\psi$. 
\end{itemize}


It is desired to derive the equations of the vehicle's motion in the inertial frame $I$. Applying Newton's laws of motion in the inertial frame $I$ expressed in the body frame $B$,
\begin{equation} \label{eq: equation_of_motion_vehicle}
    \prescript{}{B}{\mathbf{F}}^N = m \left(\frac{d}{dt} \prescript{}{B}{\mathbf{v}}^{N} + \prescript{}{B}{\boldsymbol{\omega}}^B \prescript{}{B}{\mathbf{v}}^{N} \right),
\end{equation}
where $\prescript{}{B}{\mathbf{F}}^N$ is the force acting on the body expressed in the body frame $B$, $\prescript{}{B}{\mathbf{v}}^{N}$ is the velocity of the vehicle expressed in the body frame, and $\prescript{}{B}{\boldsymbol{\omega}}^B$ is the angular velocity matrix of the body frame as seen by an observer in the ground frame, whose components are expressed in the body frame.

The force acting on the vehicle in the navigation frame is along the $y$-axis, and the vehicle's velocity in the navigation frame is along the $x$-axis, with a magnitude equal to $v$. Hence, in the body frame, these vectors are obtained as $\prescript{}{B}{\mathbf{F}}^N = \begin{pmatrix}
    -F_y \sin{\psi} & F_y \cos{\psi} & 0
\end{pmatrix}^T, \prescript{}{B}{\mathbf{v}}^{N} = \begin{pmatrix}
        v \cos{\psi} &
        v \sin{\psi} &
        0
    \end{pmatrix}^T$.
The angular velocity matrix $\prescript{}{B}{\boldsymbol{\omega}}^B$ can be obtained using the net rotation matrix $R_{net}$ relating the body frame and the inertial frame as
$\prescript{}{B}{\boldsymbol{\omega}}^B = R_{net}^T (t) \frac{d R_{net} (t)}{d t},$
where
\begin{equation*}
    R_{net} (t) = R_z (l) R_y \left(-L - \frac{\pi}{2} \right),
\end{equation*}
to align the $X, Y, Z$ axes with $e_n, e_l,$ and $e_r$ axis, respectively. 
Therefore, $\prescript{}{B}{\boldsymbol{\omega}}^B$ is given by
\begin{align*}
    \prescript{}{B}{\boldsymbol{\omega}}^B &= \begin{pmatrix}
        0 & \sin{\left(L (t) \right)} \dot{l} (t) & -\dot{L} (t) \\
        -\sin{\left(L (t) \right)} \dot{l} (t) & 0 & -\cos{\left(L (t) \right)} \dot{l} (t) \\
        \dot{L} (t) & \cos{\left(L (t) \right)} \dot{l} (t) & 0
    \end{pmatrix}.
\end{align*}

Substituting the obtained expressions for $\prescript{}{B}{\mathbf{F}}^N, \prescript{}{B}{\mathbf{v}}^N,$ and $\prescript{}{B}{\boldsymbol{\omega}}^B$ in Eq.~\eqref{eq: equation_of_motion_vehicle}, the equation obtained relating components of $F_y$ to the linear and angular speeds is
\begin{align*}
    \frac{1}{m} \begin{pmatrix}
        -F_y \sin{\psi} \\
        F_y \cos{\psi} \\
        0
    \end{pmatrix} &= v \begin{pmatrix}
        -\sin{\psi} \\
        \cos{\psi} \\
        0
    \end{pmatrix} + \begin{pmatrix}
        0 & \sin{\left(L (t) \right)} \dot{l} (t) & -\dot{L} (t) \\
        -\sin{\left(L (t) \right)} \dot{l} (t) & 0 & -\cos{\left(L (t) \right)} \dot{l} (t) \\
        \dot{L} (t) & \cos{\left(L (t) \right)} \dot{l} (t) & 0
    \end{pmatrix} \begin{pmatrix}
        v \cos{\psi} \\
        v \sin{\psi} \\
        0
    \end{pmatrix}.
\end{align*}
Pre-multiplying the obtained equation with the vector $\begin{pmatrix}
    -\sin{\psi} & \cos{\psi} & 0
\end{pmatrix}$, and simplifying,
\begin{align} \label{eq: evolution_psi_dot}
    \frac{F_y}{m} &= v \dot{\psi} - \sin{L} \dot{l}.
\end{align}
However, from Fig.~\ref{fig: frames_sphere}, the evolution of $l$ and $L$ are given by $\frac{d L}{dt} = v \cos{\psi} \frac{1}{R_e + h}, \frac{d l}{d t} = v \sin{\psi} \frac{1}{\left(R_e + h \right) \cos{L}}.$
Here, in the expression for $\frac{d l}{d t}$, $\left(R_e + h \right) \cos{L}$ is the radius of the circle that the vehicle would traverse with speed $v \sin{\psi},$ 
as seen in Fig.~\ref{fig: evolution_l_diagram}.
\begin{figure}[htb!]
    \centering
    \includegraphics[width = 0.3\linewidth]{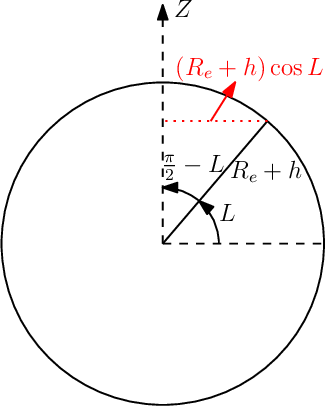}
    \caption{ Representation of radius of turn for evolution of $l$}
    \label{fig: evolution_l_diagram}
\end{figure}
Therefore, substituting the expression of $\dot{l}$ in Eq.~\eqref{eq: evolution_psi_dot} and rearranging, the evolution of $\psi$ can be obtained as $\frac{d \psi}{d t} = \frac{1}{R_e + h} \tan{L} \sin{\psi} + \frac{1}{m v} F_y.$

Non-dimensionalizing the above system, wherein $R_e + h$ is set to one to represent the vehicle traveling on a unit sphere, and $v$ is set to one, the differential equations can be rewritten as
\begin{align}
    \frac{d L}{dt} &= \cos{\psi}, \quad
    \frac{d l}{d t} = \frac{\sin{\psi}}{\cos{L}}, \quad
    \frac{d \psi}{d t} = \tan{L} \sin{\psi} + \frac{1}{\eta} u, \label{eq: Sabban_frame_equations_rate_change_heading}
\end{align}
where $-1 \leq u \leq 1$ is the control input, and $\eta$ is a parameter that dictates the tightest turning radius of the vehicle.
Therefore, the optimal control problem is formulated as
\begin{align}
    J = \min \int_0^{L_{tot}} 1 ds
\end{align}
subject to the constraints given in Eq.~\eqref{eq: Sabban_frame_equations_rate_change_heading}
and the boundary conditions $L (0) = 0, l (0) = 0, \psi (0) = 0, L (L_{tot}) = L_f, l (L_{tot}) = l_f, \psi (L_{tot}) = \psi_f.$

The optimal control actions for the described system are desired to be obtained using PMP, for which the Hamiltonian is defined using adjoint variables $e, \lambda_L (s), \lambda_l (s),$ and $\lambda_{\psi} (s)$ as
\begin{align} \label{eq: expression_Hamiltonian}
\begin{split}
    H &= e + \lambda_L \cos{\psi} + \lambda_l \frac{\sin{\psi}}{\cos{L}} + \lambda_\psi \tan{L} \sin{\psi} + \lambda_\psi \frac{1}{\eta} u,
\end{split}
\end{align}
where the dependence of the Hamiltonian on $s$, the adjoint variables, and the control inputs are not shown for brevity. From PMP, $e \leq 0$ and is a constant. Hence, $e$ can be scaled to be equal to $-1$ or $0.$ The adjoint equations are obtained as
\begin{align}
    \dot{\lambda}_L &= -\lambda_l \frac{\sin{\psi}}{\cos^2{L}} \sin{L} - \lambda_\psi \frac{\sin{\psi}}{\cos^2{L}}, \quad
    \dot{\lambda}_l = 0, \quad
    \dot{\lambda}_\psi = \lambda_L \sin{\psi} - \lambda_l \frac{\cos{\psi}}{\cos{L}} - \lambda_\psi \tan{L} \cos{\psi}. \label{eq: evolution_lambda_psi_dot}
\end{align}
The optimal control actions in these two cases are obtained in the following lemma.

\begin{lemma}
    The optimal control input for $e = -1$ is given by
    \begin{align*}
        u = \begin{cases}
            1 & \lambda_\psi > 0, \,\, e \in \{-1, 0\} \\
            -1 & \lambda_\psi < 0, \,\, e \in \{-1, 0\} \\
            0 & \lambda_\psi \equiv 0, \,\, e = -1
        \end{cases}.
    \end{align*}
    Further, if $\lambda_\psi \equiv 0$ and $e = 0,$ then no optimal control action exists. 
\end{lemma}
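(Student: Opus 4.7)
The plan is to apply PMP pointwise in $u$, noting that the Hamiltonian in Eq.~\eqref{eq: expression_Hamiltonian} depends on the control affinely through the single term $\frac{\lambda_\psi}{\eta} u$, with coefficient independent of $e$. For the regular regime where $\lambda_\psi(s) \neq 0$, the pointwise maximum over $u \in [-1,1]$ is attained at the endpoint whose sign agrees with $\lambda_\psi$ (using $\eta > 0$), giving $u = \mathrm{sign}(\lambda_\psi)$ irrespective of whether $e = -1$ or $e = 0$. This immediately yields the first two cases in the statement.

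The nontrivial content is the singular regime, in which $\lambda_\psi \equiv 0$ on a subinterval. Since the switching function vanishes identically, all its derivatives vanish. Differentiating once and substituting $\lambda_\psi \equiv 0$ into Eq.~\eqref{eq: evolution_lambda_psi_dot} gives the algebraic relation $\lambda_L \sin\psi - \lambda_l \frac{\cos\psi}{\cos L} = 0$. Adjoining the free-terminal-length transversality condition $H \equiv 0$ (so that $e + \lambda_L \cos\psi + \lambda_l \frac{\sin\psi}{\cos L} = 0$), one obtains a $2 \times 2$ linear system in $(\lambda_L, \lambda_l)$ with determinant $-\frac{1}{\cos L}$. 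For $e = -1$ this system has the unique solution $\lambda_L = \cos\psi,\ \lambda_l = \sin\psi \cos L$. I would then exploit the fact that $\lambda_l$ is a first integral (from $\dot\lambda_l = 0$): differentiating $\sin\psi \cos L$ along the state dynamics in Eq.~\eqref{eq: Sabban_frame_equations_rate_change_heading}, the $\tan L \sin\psi$ contributions from $\dot\psi$ cancel against the $-\sin\psi \sin L \, \dot L$ term, leaving $\frac{\cos\psi \cos L}{\eta} u = 0$. Away from the polar/meridional degeneracies $\cos L = 0$ or $\cos\psi = 0$, this forces $u \equiv 0$, completing the $e = -1$ case.

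For $e = 0$ combined with $\lambda_\psi \equiv 0$, the same two equations become a \emph{homogeneous} linear system in $(\lambda_L, \lambda_l)$, still with determinant $-\frac{1}{\cos L} \neq 0$, so $\lambda_L = \lambda_l = 0$. Together with $\lambda_\psi \equiv 0$ and $e = 0$, this forces the entire multiplier tuple to vanish, contradicting PMP's nontriviality condition; hence no admissible extremal exists, proving the final assertion. The principal obstacle I anticipate is the singular-case algebra: one must carefully verify the transversality $H \equiv 0$ in the present formulation (justified because $L_{tot}$ is free), and dispense with the borderline situations $\cos\psi = 0$ or $\cos L = 0$ where the $2 \times 2$ system degenerates — these can be handled by taking one further derivative of $\lambda_\psi \equiv 0$ and using $\dot L = \cos\psi$ to show that $\dot\psi = 0$, which again collapses $\dot\psi = \tan L \sin\psi + u/\eta$ to $u = 0$.
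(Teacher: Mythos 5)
Your overall strategy coincides with the paper's: maximize the control-affine Hamiltonian to get $u=\mathrm{sign}(\lambda_\psi)$ when $\lambda_\psi\neq 0$; on a singular subinterval combine $\dot\lambda_\psi\equiv 0$ with $H\equiv 0$ to solve for $\lambda_L=-e\cos\psi$, $\lambda_l=-e\cos L\sin\psi$; conclude that $e=0$ violates nontriviality; and, for $e=-1$, differentiate once more to extract $u\equiv 0$. The only substantive difference is that last differentiation, and that is where your argument has a gap. Differentiating the constant $\lambda_l=\sin\psi\cos L$ gives $\frac{\cos\psi\cos L}{\eta}\,u=0$, which says nothing when $\cos\psi\equiv 0$ on the singular subinterval (motion along a parallel with $\psi\equiv\pm\frac{\pi}{2}$); and, contrary to your remark, this is not a degeneracy of the $2\times 2$ system — its determinant is $\pm\frac{1}{\cos L}$, independent of $\psi$ — so the case cannot be lumped with the coordinate singularity $\cos L=0$. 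Your proposed patch also fails as stated: if $\psi\equiv\pm\frac{\pi}{2}$ then indeed $\dot\psi=0$, but the dynamics then give $u=-\eta\tan L\sin\psi$, which is not zero unless $\tan L=0$, so $\dot\psi=0$ alone does not collapse to $u=0$. To close this subcase you need one more ingredient, e.g.\ note that $\lambda_L=\cos\psi\equiv 0$ is constant, hence $\dot\lambda_L=0$, while the adjoint equation with $\lambda_\psi\equiv 0$ and $\lambda_l=\pm\cos L$ gives $\dot\lambda_L=-\tan L$; this forces $L=0$ and only then $u=0$.

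The paper's computation avoids this branch altogether: differentiating $\lambda_L\sin\psi-\lambda_l\frac{\cos\psi}{\cos L}\equiv 0$ along the flow and invoking $H\equiv 0$ a second time yields $\left(\lambda_L\cos\psi+\lambda_l\frac{\sin\psi}{\cos L}\right)\dot\psi-\lambda_l\frac{\sin L}{\cos^2 L}\equiv 0$, where the coefficient of $\dot\psi$ equals $-e=1$ and never degenerates; substituting $\lambda_l=\cos L\sin\psi$ and the expression for $\dot\psi$ from the dynamics cancels the $\tan L\sin\psi$ terms and leaves $u/\eta\equiv 0$ with no assumption on $\psi$. So either adopt the paper's identity or add the extra $\dot\lambda_L$ step to your route; as written, the $\cos\psi\equiv 0$ subcase is a genuine gap, though a localized and fixable one.
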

\begin{proof}
    Since the control input pointwise maximizes the Hamiltonian, and the Hamiltonian is linear in $u$, the optimal control action for $\lambda_\psi > 0$ and $\lambda_\psi < 0$ are immediate. Now, suppose $\lambda_\psi \equiv 0.$ Then, from Eq.~\eqref{eq: evolution_lambda_psi_dot}, it follows that
    \begin{align} \label{eq: lambda_psi_id_zero_equation}
        \lambda_\psi \equiv 0 \implies \lambda_L \sin{\psi} - \lambda_l \frac{\cos{\psi}}{\cos{L}} \equiv 0.
    \end{align}
    Furthermore, since the Hamiltonian in Eq.~\eqref{eq: expression_Hamiltonian} is identically zero and $\lambda_\psi \equiv 0$ is considered, it follows that
    \begin{align} \label{eq: lambda_psi_id_zero_equation_H_zero}
        \lambda_L \cos{\psi} + \lambda_l \frac{\sin{\psi}}{\cos{L}} = -e.
    \end{align}
    Using the above two equations, the solution obtained when $L \neq \pm \frac{\pi}{2}$ is $\lambda_L = -e \cos{\psi}, \lambda_l = -e \cos{L} \sin{\psi}$.
    If $e = 0,$ non-triviality condition is not satisfied since all adjoint variables are zero.
    
    Let $e = -1.$ It follows that $\lambda_l = \cos{L} \sin{\psi}.$ Differentiating Eq.~\eqref{eq: lambda_psi_id_zero_equation} and simplifying using Eqs.~\eqref{eq: Sabban_frame_equations_rate_change_heading} and \eqref{eq: evolution_lambda_psi_dot},
    \begin{align*}
        \left(\lambda_L \cos{\psi} + \lambda_l \frac{\sin{\psi}}{\cos{L}} \right) \dot{\psi} - \lambda_l \frac{\sin{L}}{\cos^2{L}} \equiv 0.
    \end{align*}
    Noting that $\lambda_\psi \equiv 0,$ the Hamiltonian reduces to $H = e + \lambda_L \cos{\psi} + \lambda_l \frac{\sin{\psi}}{\cos{L}}$, which is identically zero. Hence, from the above equation, it follows that for $e = -1,$
    $\dot{\psi} = \lambda_l \frac{\sin{L}}{\cos^2{L}}.$
    Using the expression for $\dot{\psi}$ from Eq.~\eqref{eq: Sabban_frame_equations_rate_change_heading}, it follows that
    \begin{align} \label{eq: solving_for_u_lambda_psi_zero}
        \lambda_l \frac{\sin{L}}{\cos^2{L}} = \tan{L} \sin{\psi} + \frac{1}{R} u.
    \end{align}
    Substituting the solution $\lambda_l = \cos{L} \sin{\psi}$ in the above equation, it follows that $\frac{1}{R} u \equiv 0 \implies u \equiv 0.$
\end{proof}

Now, to show that the model with the spherical coordinates is equivalent to the model with the Sabban frame, it is desired to show that
\begin{itemize}
    \item The evolution of the adjoint variable corresponding to the control input in the two models, which are $\lambda_\psi$ and $H_{12}$, are the same.
    \item The optimal control inputs in the two models are the same. 
\end{itemize}

\begin{lemma}
    The evolution of $\lambda_\psi$ in the model with spherical coordinates is the same as that of the $H_{12}$ in the model with the Sabban frame if $U_{max} = \frac{1}{\eta}$.
\end{lemma}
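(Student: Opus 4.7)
The plan is to derive, by direct differentiation along the trajectory, a closed-form second-order ODE for $\lambda_\psi(s)$ and show that it coincides with Eq.~\eqref{eq: evolution_H12} for $H_{12}(s)$ once we substitute $U_{max} = 1/\eta$. The strategy parallels how Eq.~\eqref{eq: evolution_H12} was obtained in the Sabban-frame model: there one differentiated $\dot H_{12} = -h_2$ a second time and used $H \equiv 0$ together with the Hamiltonian structure to eliminate $h_1$ in favor of $\lambda$ and $\kappa H_{12}$. Here, the analogous computation starts from the expression for $\dot\lambda_\psi$ in Eq.~\eqref{eq: evolution_lambda_psi_dot}, uses $\dot\lambda_l = 0$, the state equations~\eqref{eq: Sabban_frame_equations_rate_change_heading}, and the condition $H \equiv 0$ (valid along an optimal trajectory of this free-final-time problem), in order to reduce the second derivative to an ODE containing only $\lambda_\psi$, $u$, and $e$.

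Concretely, first I would differentiate
$\dot\lambda_\psi = \lambda_L\sin\psi - \lambda_l\cos\psi/\cos L - \lambda_\psi\tan L\cos\psi$
with respect to $s$, substituting $\dot\lambda_L$ and $\dot\lambda_l=0$ from Eq.~\eqref{eq: evolution_lambda_psi_dot} and $\dot L,\dot l,\dot\psi$ from Eq.~\eqref{eq: Sabban_frame_equations_rate_change_heading}. On a subarc where the control is constant (guaranteed by the previous lemma, since the optimal $u$ is piecewise constant in $\{-1,0,1\}$), $u$ carries through the differentiation unchanged. Second, I would introduce the two combinations
\[
A := \lambda_L\sin\psi - \lambda_l\frac{\cos\psi}{\cos L},\qquad B := \lambda_L\cos\psi + \lambda_l\frac{\sin\psi}{\cos L},
\]
so that $\dot\lambda_\psi = A - \lambda_\psi\tan L\cos\psi$, and use $H\equiv 0$ in Eq.~\eqref{eq: expression_Hamiltonian} to replace $B$ by $-e - \lambda_\psi\tan L\sin\psi - (\lambda_\psi/\eta)u$ everywhere it appears in $\ddot\lambda_\psi$. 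Third, the remaining $\lambda_L,\lambda_l$-terms should combine via $\sin^2\psi+\cos^2\psi=1$ into an expression involving only $\lambda_\psi$, $L$, $\psi$, $u$, and $e$; all explicit $L$- and $\psi$-dependent coefficients are expected to cancel, collapsing the identity into the constant-coefficient ODE
\[
\ddot\lambda_\psi + \bigl(1 + (u/\eta)^2\bigr)\lambda_\psi \;=\; e\cdot(u/\eta),
\]
up to a sign convention fixing the correspondence $\lambda_\psi\leftrightarrow H_{12}$ and $e\leftrightarrow -\lambda$. Comparison with Eq.~\eqref{eq: evolution_H12} and the identification $U_{max}=1/\eta$ then yields the claim.

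The main obstacle is purely algebraic bookkeeping: the second derivative of $\dot\lambda_\psi$ produces many terms with coefficients $\sin L/\cos^2 L$, $\sec^2 L$, and mixed trigonometric factors in $\psi$, and the constant-coefficient reduction relies on the simultaneous use of (i) the Pythagorean identity applied to the $\lambda_L,\lambda_l$ terms arising from $\dot\lambda_L$ and from $B\dot\psi$, and (ii) the elimination of $B$ through $H\equiv 0$. Once the ODE is in hand, the sign of the identification is dictated by matching the control laws $\kappa = -U_{max}\,\mathrm{sign}(H_{12})$ from Eq.~\eqref{eq: optimal_curvature} and $u = \mathrm{sign}(\lambda_\psi)$ from Lemma~2, which is a routine check and will be noted as a consequence of the ODE identification.
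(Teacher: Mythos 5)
Your proposal takes essentially the same route as the paper: differentiate $\dot{\lambda}_\psi$ from Eq.~\eqref{eq: evolution_lambda_psi_dot}, substitute the state and adjoint equations together with $H \equiv 0$ to collapse everything into a constant-coefficient second-order ODE, and compare with Eq.~\eqref{eq: evolution_H12} under $U_{max} = \frac{1}{\eta}$; the bookkeeping you outline (the combinations $A$ and $B$, elimination of $B$ via $H\equiv 0$, and the Pythagorean identity) is exactly what the paper's terse ``and simplifying'' hides, and it does go through. The only slip is the sign of the forcing term: the correct result is $\ddot{\lambda}_\psi + \left(1 + \frac{u^2}{\eta^2}\right)\lambda_\psi = -e\,\frac{u}{\eta}$ rather than $+e\,\frac{u}{\eta}$, which, with $e \leftrightarrow -\lambda$ and $\kappa \leftrightarrow -\frac{u}{\eta}$ (consistent with the control laws you cite), matches $-\lambda\kappa$ in Eq.~\eqref{eq: evolution_H12} --- a sign convention you already flagged as to be fixed by that matching.
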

\begin{proof}
    Consider the expression for the derivative of $\lambda_\psi$ given in Eq.~\eqref{eq: evolution_lambda_psi_dot}. Noting that $\dot{\lambda}_\psi$ is differentiable, the expression for its second derivative can be obtained by differentiating Eq.~\eqref{eq: evolution_lambda_psi_dot} and simplifying as
    \begin{align*}
        \Ddot{\lambda}_{\psi} &= 
        -\lambda_\psi \left(1 + \frac{u^2}{\eta^2} \right) - e \frac{u}{\eta},
    \end{align*}
    where $e \in \{-1, 0\}$ and $-1 \leq u \leq 1$. It should be recalled that the evolution of $H_{12}$ is given by the differential equation in Eq.~\eqref{eq: evolution_H12}, where $\kappa (s)$ is defined in Eq.~\eqref{eq: optimal_curvature} and lies in $[-U_{max}, U_{max}]$. Hence, it can be concluded the evolution of $H_{12}$ is the same as $\lambda_\psi$ if $U_{max} = \frac{1}{\eta}.$
\end{proof}


Now, it remains to be shown that the optimal segments in the alternate model considered correspond to tight circular arcs of radius $\frac{\eta}{\sqrt{1 + \eta^2}}$, and correspondingly, the bound $U_{max}$ in the model with the Sabban frame relates to $\eta$ in the alternate model through $U_{max} = \frac{1}{\eta}$.

\begin{lemma}
    The control input $u = \pm 1$ corresponds to the vehicle moving a circular arc on the sphere with radius $r = \frac{\eta}{\sqrt{1 + \eta^2}}$ and $u = 0$ corresponds to the vehicle moving on a great circular arc.
\end{lemma}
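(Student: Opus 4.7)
My plan is to identify the trajectory by computing its geodesic curvature from the equations of motion in Eq.~\eqref{eq: Sabban_frame_equations_rate_change_heading} and then appealing to the classical fact that a smooth curve on the unit sphere with constant geodesic curvature is either a great circle or a small circle of a specific radius.

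First, I would introduce the orthonormal moving frame adapted to the geographic coordinates: $\mathbf{e}_L = \partial_L \mathbf{X}$ pointing north and $\mathbf{e}_l = (1/\cos L)\partial_l \mathbf{X}$ pointing east. Since $t$ already serves as arc length (the longitudinal speed has been normalized to unity), Eq.~\eqref{eq: Sabban_frame_equations_rate_change_heading} says the unit tangent to the trajectory is $\mathbf{T} = \cos\psi\,\mathbf{e}_L + \sin\psi\,\mathbf{e}_l$. Using the spherical metric $ds^2 = dL^2 + \cos^2 L\,dl^2$ and the corresponding Christoffel symbols (or, more quickly, Liouville's formula applied to this orthogonal parameterization in which the parallels have geodesic curvature $\tan L$ and the meridians are geodesics), the signed geodesic curvature of the trajectory is
\begin{equation*}
    \kappa_g = \frac{d\psi}{ds} - \tan L\,\sin\psi.
\end{equation*}
Substituting the third equation of Eq.~\eqref{eq: Sabban_frame_equations_rate_change_heading} collapses the $\tan L \sin\psi$ term and yields $\kappa_g = u/\eta$.

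With this identification the claim becomes a statement about curves of constant geodesic curvature. For $u = 0$, $\kappa_g \equiv 0$, so $\mathbf{T}$ is parallel transported along the curve and the trajectory is a geodesic on the unit sphere, i.e.\ a great circle. For $u = \pm 1$, $\kappa_g \equiv \pm 1/\eta$ is a non-zero constant; the curve is therefore a small circle, which I would verify by showing it lies in an affine plane (either by integrating the Frenet--Serret-type equations of the Sabban frame, or by observing that the polar axis of the small circle is a conserved vector for the ODE on $SO(3)$). To obtain the radius, I would use the standard spherical geometry identity that a small circle at angular (geodesic) distance $\alpha$ from its pole on the unit sphere has Euclidean radius $\sin\alpha$ and geodesic curvature $\cot\alpha$. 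Setting $\cot\alpha = 1/\eta$ gives $\sin\alpha = \eta/\sqrt{1+\eta^2}$, which is the asserted radius $r$.

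The main obstacle is the first step: deriving the geodesic curvature formula cleanly and with the correct sign. This is a routine differential-geometry computation but easy to slip on — I would double-check it against a limiting case (e.g., a parallel of latitude $L$ has $\psi \equiv \pi/2$, yielding $\kappa_g = -\tan L$, consistent with the known curvature of parallels up to the orientation chosen for the in-plane normal). After that, identifying a constant-geodesic-curvature curve on $S^2$ with a small circle and translating geodesic curvature to Euclidean radius are standard. As a sanity check, the formula $r = \eta/\sqrt{1+\eta^2}$ matches $1/\sqrt{1+U_{max}^2}$ under $U_{max} = 1/\eta$, which is precisely the correspondence established in the preceding lemma.
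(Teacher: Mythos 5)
Your proposal is correct, but it takes a genuinely different route from the paper. The paper argues extrinsically and by normalization: invoking the rotational symmetry of the sphere, it picks initial conditions $\psi(0)=\frac{\pi}{2}$ and $\tan L(0) = -\frac{u}{\eta}$ so that $\dot{\psi}\equiv 0$ and $\dot{L}\equiv 0$ along a constant-$u$ segment; the trajectory is then literally a parallel of latitude, a planar section of the sphere, whose Euclidean radius $\cos L(0) = \frac{1}{\sqrt{1+\tan^2 L(0)}} = \frac{\eta}{\sqrt{1+\eta^2}}$ (and $1$ when $u=0$) is read off directly, after which the paper separately matches the sign of $u$ to left/right turns. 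You instead work intrinsically: via Liouville's formula for the metric $dL^2+\cos^2 L\,dl^2$ you identify the signed geodesic curvature of any trajectory as $\kappa_g = \dot{\psi}-\tan L\sin\psi = \frac{u}{\eta}$, and then invoke the classical classification of constant-geodesic-curvature curves on $S^2$ (geodesics are great circles; $|\kappa_g|=\cot\alpha$ gives a small circle of Euclidean radius $\sin\alpha = \frac{\eta}{\sqrt{1+\eta^2}}$). Your route avoids the paper's ``without loss of generality'' normalization (which silently relies on the symmetry of the system under rotations) and has the added benefit of exhibiting $\frac{u}{\eta}$ as exactly the geodesic curvature, which dovetails with the Sabban-frame model where $u_g$ plays that role; the paper's route is more elementary and self-contained, needing no Liouville formula nor the classification theorem, at the cost of only verifying the geometry along specially chosen representatives. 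Two small points to tidy in your write-up: fix the sign convention in Liouville's formula explicitly (your sanity check on parallels suffices, and only $|\kappa_g|$ matters for the radius), and either prove or cite the fact that a constant-geodesic-curvature curve on the sphere lies in a plane, since that is the step carrying the geometric content you otherwise take for granted. Note also that the paper's proof goes on to establish which sign of $u$ gives a left versus right turn; your argument does not address this, but the lemma as stated does not require it.
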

\begin{proof}
    To show that the vehicle travels on a circular arc, it suffices to show that the vehicle travels on a plane intersected with the considered unit sphere. For this purpose, the initial conditions on $L, l,$ and $\psi$ can be chosen without loss of generality by utilizing a coordinate transformation. Consider the equation for $\dot{\psi}$ given in Eq.~\eqref{eq: Sabban_frame_equations_rate_change_heading}, which can be observed to depend on the initial condition chosen for $L$ and $\psi$. It is desired to choose an initial condition such that $\dot{\psi} \equiv 0.$
    Suppose $\psi (0) = \frac{\pi}{2}.$ Then, $L (0)$ is selected such that $\dot{\psi} \equiv 0,$ i.e.,
    \begin{align*}
        \tan{L (0)} = -\frac{1}{\eta} u \quad \implies L (0) = \tan^{-1} \left(-\frac{1}{\eta} u \right).
    \end{align*}
    It should be noted that such a choice of $\psi (0)$ implies that $L$ is constant, since $\dot{L} = \cos{\psi}$
    and $\cos{\psi (0)} = 0.$ Therefore, $\psi$ and $L$ are constants over the considered segment. Choosing $l (0) = 0,$ the plane containing the segments corresponding to $u = 0, 1,$ and $-1$ are shown in Figs.~\ref{subfig: great_circle}, \ref{subfig: tight_turn_1}, and \ref{subfig: tight_turn_2}, respectively. It can be observed that the radius of the great circular segment equals $1,$ whereas the tight turns (corresponding to $u \pm 1$) equal $\frac{\eta}{\sqrt{1 + \eta^2}}$.
    \begin{figure}[htb!]
         \centering
         \begin{subfigure}[b]{0.33\textwidth}
             \centering
             \includegraphics[width=\textwidth]{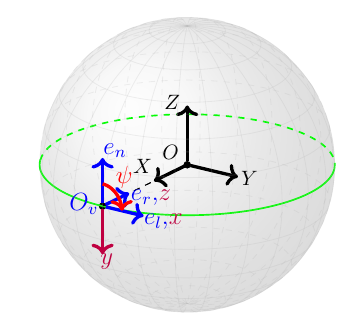}
             \caption{Great circular corresponding to $u = 0$}
             \label{subfig: great_circle}
         \end{subfigure}
         \hfill
         \begin{subfigure}[b]{0.33\textwidth}
             \centering
             \includegraphics[width=\textwidth]{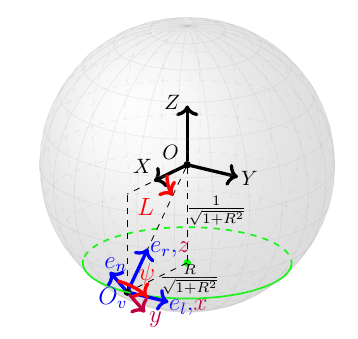}
             \caption{Tight turn corresponding to $u = 1$}
             \label{subfig: tight_turn_1}
         \end{subfigure}
         \hfill
         \begin{subfigure}[b]{0.33\textwidth}
             \centering
             \includegraphics[width=\textwidth]{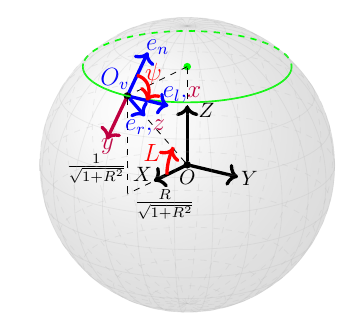}
             \caption{Tight turn corresponding to $u = -1$}
             \label{subfig: tight_turn_2}
         \end{subfigure}
        \caption{Optimal segments for alternate sphere model}
        \label{fig: optimal_segments_alternate_sphere_model}
    \end{figure}
    It remains to be shown that $u = 1$ corresponds to a left turn, whereas $u = -1$ corresponds to a right turn. For this purpose, consider the initial condition for $L, l,$ and $\psi$ to be $0, 0, \frac{\pi}{2}$. In this case, consider $\dot{\psi}$ given in Eq.~\eqref{eq: Sabban_frame_equations_rate_change_heading}, which reduces to $\dot{\psi} = \frac{1}{\eta} u$.
    For $u = 1,$ $\dot{\psi}$ is positive, which implies that the vehicle takes a right turn (refer to Fig.~\ref{fig: frames_sphere}), whereas for $u = -1,$ $\dot{\psi}$ is negative, which implies that the vehicle takes a left turn. Therefore, if $\lambda_\psi > 0,$ the vehicle takes a right turn, whereas if $\lambda_\psi < 0,$ the vehicle takes a left turn. It must be noted here that if $H_{12} > 0$ in the Sabban frame model, $u_g = -U_{max},$ which corresponds to a right turn (refer to Eq.~\eqref{eq: optimal_control_inputs}), whereas if $H_{12} < 0,$ the vehicle takes a left turn. In this regard, the sign of the adjoint variable in both models yields the same type of turn for the vehicle.
\end{proof}

Having shown that the evolution of the adjoint variables in the two models are the same, the optimal control actions are the same, and the sign of the two adjoint variables dictate the same type of turn being taken in both the models, it follows that the two models are equivalent. In this regard, the following theorem is immediate for the motion planning problem on a sphere using spherical coordinates using the results from \cite{monroy, 3D_Dubins_sphere}.

\begin{theorem}
    For the model employing spherical coordinates for motion planning on a sphere, the optimal path is of type $CGC, CCC,$ or a degenerate path of the same for $r = \frac{\eta}{\sqrt{1 + \eta^2}} \in (0, \frac{1}{2}] \bigcup \{\frac{1}{\sqrt{2}} \}$.
\end{theorem}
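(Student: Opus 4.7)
The plan is to invoke the equivalence that has been progressively built up in the three preceding lemmas and then transport the known characterization of optimal paths from the Sabban-frame model to the spherical-coordinate model. Concretely, I would first package the three ingredients established so far into a single statement of equivalence: by Lemma~3, under the identification $U_{max} = \tfrac{1}{\eta}$, the scalar adjoint $\lambda_\psi$ in the spherical-coordinate model satisfies the same second-order ODE as $H_{12}$ in the Sabban-frame model; by Lemma~2 combined with Eq.~\eqref{eq: optimal_control_inputs}, the bang-bang/singular switching law based on the sign of the switching function is identical in both models; and by Lemma~4, the geometric meaning of the three control values ($u=\pm 1$ tight turns of radius $r=\tfrac{\eta}{\sqrt{1+\eta^2}}$ and $u=0$ great circles) coincides with the arcs in the Sabban-frame model, with matching chirality. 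Therefore every extremal in the spherical-coordinate model projects, segment by segment, to an extremal in the Sabban-frame model of the same concatenation type and the same arc lengths.

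Next, I would invoke the characterizations already established in the literature for the Sabban-frame model. From \cite{3D_Dubins_sphere}, when $r \in (0, \tfrac{1}{2}]$, every optimal path for the Dubins problem on the unit sphere under that model is of type $CGC$, $CCC$, or a degenerate subpath of these; from \cite{monroy}, the same conclusion holds for the distinguished value $r = \tfrac{1}{\sqrt{2}}$. Because the switching structure, adjoint dynamics, segment shapes, and turn directions have been matched one-to-one via Lemmas~2--4, any candidate optimal trajectory in the spherical-coordinate model must inherit this concatenation structure.

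Finally, I would close the argument by observing that the boundary-value correspondence is respected: a triple $(L(s), l(s), \psi(s))$ in the spherical-coordinate model determines a unique frame $g(s) \in SO(3)$ and vice versa (modulo a fixed initial alignment), so the minimum-length path connecting two prescribed configurations in one model has the same length and type as in the other. Hence the optimal path is of type $CGC$, $CCC$, or a degenerate subpath thereof, exactly for the stated range $r = \tfrac{\eta}{\sqrt{1+\eta^2}} \in (0, \tfrac{1}{2}] \cup \{\tfrac{1}{\sqrt{2}}\}$.

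The main obstacle I anticipate is not analytical but bookkeeping: ensuring that the sign conventions (right/left turn versus sign of the adjoint, and the $-u_g$ appearing in Eq.~\eqref{eq: Lie_group_evolution} versus the $+u/\eta$ in Eq.~\eqref{eq: Sabban_frame_equations_rate_change_heading}) line up cleanly so that the equivalence preserves chirality rather than flipping it. This has already been handled at the end of the proof of Lemma~4, so the remaining step is essentially a citation-level invocation of the prior results, and the proof should be quite short.
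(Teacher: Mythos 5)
Your proposal is correct and follows essentially the same route as the paper: the theorem is stated as an immediate consequence of the equivalence established by the preceding lemmas (matching adjoint dynamics, control laws, segment geometry, and chirality) combined with the known Sabban-frame results of \cite{monroy, 3D_Dubins_sphere} for $r = \frac{1}{\sqrt{2}}$ and $r \in (0, \frac{1}{2}]$, respectively. Your additional remark on the configuration correspondence between $(L, l, \psi)$ and $g \in SO(3)$ is a reasonable explicit touch that the paper leaves implicit.
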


\section{Conclusion}

In this article, two models were studied for motion planning of a Dubins vehicle on a sphere. In the first model, the vehicle's location and orientation were described using a frame, whereas in the second model, spherical coordinates and a heading angle were used. While the first model has been utilized in the literature to obtain the optimal path type for $r \in (0, \frac{1}{2}] \bigcup \{\frac{1}{\sqrt{2}} \}$, which is the turning radius of the tightest turn, the latter model has not been derived in the literature. Hence, the model using the latter method is derived and formulated as an optimal control problem in this article, which is applicable to the aerospace community. Furthermore, the equivalence of the two models is shown by proving that the optimal control actions are the same and the adjoint variables controlling the control action evolve through the same evolution equation. By showing the equivalence of these two models, the motion planning problem using the spherical coordinate description is also solved, wherein the optimal path is obtained to be of type $CGC, CCC,$ or a degenerate path of the same for $r \in (0, \frac{1}{2}] \bigcup \{\frac{1}{\sqrt{2}} \}$.

\bibliography{sample}

\end{document}